\newtheorem{theorem}{Theorem}
\newtheorem{definition}[theorem]{Definition}
\newtheorem{lemma}[theorem]{Lemma}
\newtheorem{remark}[theorem]{Remark}
\DeclareMathOperator{\dif}{d}
\begin{document}

\title{On multi-point resonant problems on the half-line}
\date{}
\author{Luc{\' i}a L\'opez-Somoza$^1$ and Feliz Minh\'os$^{2,3}$ \\
$^1$ Departamento de Estat\'istica, An\'alise Matem\'atica e Optimizaci\'on \\ Instituto de Ma\-te\-m\'a\-ti\-cas, Facultade de Matem\'aticas, \\
Universidade de Santiago de Com\-pos\-te\-la, 15782 Santiago de Compostela,\\
Galicia, Spain.\\
lucia.lopez.somoza@usc.es\\
$^2$ Departamento de Matem\'atica, Escola de Ci\^encias e Tecnologia,\\
$^3$ Centro de Investiga\c{c}\~ao em Matem\'atica e Aplica\c{c}\~oes (CIMA),%
\\
Instituto de Investiga\c{c}\~ao e Forma\c{c}\~ao Avan\c{c}ada,\\
Universidade de \'Evora, \'Evora, Portugal.\\
fminhos@uevora.pt}
\maketitle

\begin{abstract}
In this work we obtain sufficient conditions for the existence of bounded
solutions of a resonant multi-point second-order boundary value problem,
with a fully differential equation.

The noninvertibility of the linear part is overcome by a new perturbation
technique, which allows to obtain an existence result and a localization
theorem. Our hypotheses are clearly much less restrictive than the ones
existent in the literature and, moreover, they can be applied to higher
order, resonant or non-resonant, boundary value problems defined on the
half-line or even on the real line.
\end{abstract}

\textbf{Keywords:} multipoint problems, unbounded domains, resonant problems.

\section{Introduction}

In this paper, we will prove the existence of bounded solutions for the
multi-point boundary value problem 
\begin{equation}
\left\{ 
\begin{split}
& u^{\prime \prime }(t)=f(t,u(t),u^{\prime }(t)),\quad t\in \lbrack 0,\infty
), \\
& u(0)=0,\ u^{\prime }(+\infty )=\sum_{i=1}^{m-1}\alpha _{i}\,u^{\prime
}(\xi _{i}),
\end{split}%
\right.   \label{e-multipoint}
\end{equation}%
where $\alpha _{i}>0$ and $0=\xi _{1}<\cdots <\xi _{m-1}<+\infty $. We
assume that the coefficients $\alpha _{i}$ satisfy the resonant condition 
\begin{equation}
\sum_{i=1}^{m-1}\alpha _{i}=1.  \label{e-resonant}
\end{equation}

A boundary value problem is said to be resonant when the correspondent
homogeneous problem has nontrivial solutions. In fact, under condition %
\eqref{e-resonant}, the homogeneous boundary value problem related to %
\eqref{e-multipoint}, 
\begin{equation}
\left\{ 
\begin{split}
& u^{\prime \prime }(t)=0,\quad t\in \lbrack 0,\infty ), \\
& u(0)=0,\ u^{\prime }(+\infty )=\sum_{i=1}^{m-1}\alpha _{i}\,u^{\prime
}(\xi _{i}),
\end{split}%
\right.   \label{e-multipoint-homog}
\end{equation}%
has a nontrivial solution.

These resonant problems have been studied for many years under a huge
variety of arguments: degree theory has been used in, for instance, \cite%
{Bebernes, Feng, Kauf, Rach}, Lyapunov--Schmidt arguments, \cite{Ma}, a
Leggett--Williams theorem \cite{Franco, O'Regan}, fixed point and fixed
point index theories, \cite{Bai, Han, Infante, Yang}, monotone method
together with upper and lower solutions technique, \cite{Mosa}, among others.

Boundary value problems on unbounded intervals arise in many models of
applied mathematics, such as in combustion theory, in plasma physics, to
model the unsteady flow of a gas through semi-infinite porous media, to
study the electrical potential of an isolated neutral atom, ... For more
details, techniques and applications in this field we refer, for example, to
\cite{Jiang, Kim, Kosma, Lian, Xu}, and the monograph \cite{AgarRegInfinit}.

In a theoretical point of view, resonance problems can be formulated as an
equation $Lx=Nx$, where $L$ is a noninvertible operator. Therefore, in
particular, the resonant condition \eqref{e-resonant} implies that the
Green's function related to problem \eqref{e-multipoint-homog} does not
exist. This issue is overcome applying several techniques. For instance, in 
\cite{JiangYang2016} the authors studied the problem 
\begin{equation*}
\left\{ 
\begin{split}
& u^{\prime \prime }(t)+f(t,u(t))=0,\quad t\in \lbrack 0,\infty ), \\
& u(0)=0,\ u^{\prime }(+\infty )=\sum_{i=1}^{m-1}\alpha _{i}\,u^{\prime
}(\xi _{i}),
\end{split}%
\right. 
\end{equation*}%
also under condition \eqref{e-resonant} and, to deal with the resonance
problem they defined some suitable operators and were able to find a
solution in the space 
\begin{equation*}
E=\left\{ u\in {\mathcal{C}}[0,\infty ),\ u(0)=0,\ \sup_{t\in \lbrack
0,\infty )}\frac{|u(t)|}{1+t}<+\infty \right\} ,
\end{equation*}%
so clearly that solution could be unbounded.

Our arguments apply a different technique to find bounded solutions for
problem \eqref{e-multipoint}. Moreover, we note that, on the contrary to 
\cite{JiangYang2016}, we allow the nonlinearity $f$ to depend on the first
derivative of $u$.

In \cite{DjebaliGuedda2017}, a similar third order boundary value problem is
considered, namely 
\begin{equation*}
\left\{%
\begin{split}
& u^{\prime \prime \prime }(t)=f(t,u(t),u^{\prime }(t),u^{\prime \prime
}(t))=0, \quad t\in[0,\infty), \\
& u(0)=u^{\prime }(0)=0, \ u^{\prime \prime }(+\infty)=\sum_{i=1}^{m-2}
\alpha_i \,u^{\prime \prime }(\xi_i),
\end{split}%
\right.
\end{equation*}
coupled with the resonant condition 
\begin{equation*}
\sum_{i=1}^{m-2} \alpha_i=1.
\end{equation*}
The techniques used in \cite{DjebaliGuedda2017} are basically the same than
in \cite{JiangYang2016} and, again, the authors are able to find a solution
which could be unbounded. On the other hand, they allow the nonlinearity $f$
to depend on all the derivatives up to the highest possible order but, to do
that, they asked for the following quite restrictive condition on the
nonlinearity:

\begin{itemize}
\item[($H_0$)] $f:[0,+\infty)\times {\mathbb{R}}^3\rightarrow {\mathbb{R}}$
is $s^2$-Carath\'eodory, that is,

\begin{itemize}
\item[(i)] $f(\cdot,u,v,w)$ is measurable for each $(u,v,w)$ fixed.

\item[(ii)] $f(t,\cdot,\cdot,\cdot)$ is continuous for a.\,e. $%
t\in[0,\infty) $.

\item[(iii)] For each $r>0$ there exists $\psi _{r}\in L^{1}[0,\infty )$
with $t\,\psi _{r},\,t^{2}\,\psi _{r}\in L^{1}[0,\infty )$ such that 
\begin{equation*}
\left\vert f(t,u,v,w)\right\vert \leq \psi _{r}(t),\quad \forall
\,(u,v,w)\in (-r,r)\times (-r,r)\times (-r,r),\ \text{a.\thinspace e. }t\in
\lbrack 0,\infty ).
\end{equation*}
\end{itemize}
\end{itemize}

Here, we must point out that, although in this paper we work with the second
order problem, the same techniques could be applied to the third order
problem. In this sense, we allow the nonlinearity $f$ to depend on all the
derivatives up to the highest possible order but using either hypothesis $%
(H_{1})$ or $(H_{2})$ instead of $(H_{0})$. This way, our hypotheses
are clearly much less restrictive than $(H_{0})$ so our method improves the
results in \cite{DjebaliGuedda2017}.

We would also like to mention that our technique of modifying the problem,
in order to obtain another one with a related Green's function in $%
L^{1}[0,\infty )\cap L^{\infty }[0,\infty )$, is also applicable to problems
without resonance. Thus, if we used this idea in problems like 
\begin{equation*}\left\{ 
\begin{split}
& u^{(4)}(t)+k\,u(t)=f(t,u(t),u^{\prime }(t),u^{\prime \prime }(t),u^{\prime
\prime \prime }(t)),\ t\in {\mathbb{R}}, \\[2pt]
& u(\pm \infty )=0,\ u^{\prime }(\pm \infty )=0,
\end{split}\right.\end{equation*}
considered in \cite{MinCar-4ord}, we could extend the results in that
reference to nonlinearities satisfying $(H_{2})$ instead of $(H_{1})$. The
same could be said about \cite{MinCar}.

The paper is divided into several sections: In Section 2, we construct an
auxiliary differential problem whose solutions are the same than those of
problem \eqref{e-multipoint}. In Section 3, this auxiliary problem is
transformed into an integral one, for which some bounded solutions are
found. These solutions are showed to be solutions of the original problem.
Finally, Section 4 includes an example which can not be solved with the
results in \cite{JiangYang2016}.

\section{Preliminaries}

We will construct now a modified problem, which will be shown equivalent to %
\eqref{e-multipoint}, for which it is possible to construct the related
Green's function.

Indeed, consider the modified problem 
\begin{equation}  \label{e-multipoint-mod}
\left\{%
\begin{split}
& u^{\prime \prime }(t)+k\,u^{\prime }(t)+M\,u(t)=0, \quad t\in[0,\infty), \\
& u(0)=0, \ u^{\prime }(+\infty)=\sum_{i=1}^{m-1} \alpha_i \,u^{\prime
}(\xi_i),
\end{split}%
\right.
\end{equation}
where $k$ and $M$ are positive numbers such that $k^2-4\,M<0$ and 
\begin{equation*}
{\displaystyle\sum_{i=1}^{m-1}} \alpha_i\, e^{-\frac{k\,\xi_i}{2}} \left(-%
\frac{k}{2} \sin{(\gamma \,\xi_i)} + \gamma\,\cos{(\gamma\,\xi_i)}
\right)\neq 0,
\end{equation*}
with $\gamma=\sqrt{4\,M-k^2}$.

If we denote by
\begin{equation*}
h_l(s)=\frac{{\displaystyle\sum_{i=l}^{m-1}} \alpha_i\, e^{-\frac{k\,\xi_i}{2%
}} \left(-\frac{k}{2} \sin{(\gamma\,(s-\xi_i))} + \gamma\,\cos{%
(\gamma\,(s-\xi_i))} \right)} {{\displaystyle\sum_{i=1}^{m-1}} \alpha_i\,
e^{-\frac{k\,\xi_i}{2}} \left(-\frac{k}{2} \sin{(\gamma \,\xi_i)} +
\gamma\,\cos{(\gamma\,\xi_i)} \right)},
\end{equation*}
then the Green's function related to problem \eqref{e-multipoint-mod} is
given by the following expression: 
\begin{equation*}
G(t,s)=\frac{1}{\gamma}\,e^{-\frac{k\,(t+s)}{2}}\left\{%
\begin{array}{ll}
-\sin{(\gamma\,t)} \, h_l(s), & 0\le t \le s,\ \ \xi_{l-1} \le s <\xi_l, \\%
[.3cm] 
0, & 0\le t \le s,\ \ \xi_{m-1} \le s, \\[.3cm] 
-\sin{(\gamma\,t)} \, h_l(s)+\sin{(\gamma\,(s-t))}, & 0\le s < t, \ \
\xi_{l-1} \le s <\xi_l, \\[.3cm] 
\sin{(\gamma\,(s-t))}, & 0\le s < t,\ \ \xi_{m-1} \le s.%
\end{array}%
\right.
\end{equation*}

The first derivative of the Green's function is given by 
\begin{equation*}
\frac{\partial\,G}{\partial\, t}(t,s) =\frac{1}{\gamma}\,e^{-\frac{k\,(t+s)}{%
2}}\left\{%
\begin{array}{ll}
\left(\frac{k}{2}\,\sin{(\gamma\,t)} -\gamma\,\cos{(\gamma\,t)}\right)
\,h_l(s), & 0\le t \le s,\ \ \xi_{l-1} \le s <\xi_l, \\[.3cm] 
0, & 0\le t \le s,\ \ \xi_{m-1} \le s, \\[.3cm] 
\begin{array}{l}
\left(\frac{k}{2}\,\sin{(\gamma\,t)} -\gamma\,\cos{(\gamma\,t)}\right) \,
h_l(s) \\[1pt] 
-\frac{k}{2}\sin{(\gamma\,(s-t))}-\cos{(\gamma\,(s-t))},%
\end{array}
& 0\le s < t,\ \ \xi_{l-1} \le s <\xi_l, \\[.6cm] 
-\frac{k}{2}\sin{(\gamma\,(s-t))}-\cos{(\gamma\,(s-t))}, & 0\le s < t,\ \
\xi_{m-1} \le s.%
\end{array}%
\right.
\end{equation*}

\begin{remark}
\label{r-bound-G-Gprima} It is easy to see that there exist two positive
constants, $C_1$ and $C_2$, such that 
\begin{equation*}
\left|G(t,s)\right|\le C_1\, e^{-\frac{k(t+s)}{2}}
\end{equation*}
and 
\begin{equation*}
\left|\frac{\partial\,G}{\partial\, t}(t,s)\right|\le C_2\, e^{-\frac{k(t+s)%
}{2}},
\end{equation*}
for all $(t,s)\in [0,\infty)\times [0,\infty)$.

As a consequence, it is clear that both $G(t,\cdot)$ and $\frac{\partial\,G}{%
\partial\,t}(t,\cdot)$ belong to $L^1[0,\infty) \cap L^\infty[0,\infty)$ for
all $t\in[0,\infty)$.
\end{remark}

On the other hand, to deal with the lack of compactness of the set $X$ that
we will consider in the following section, we will use the following result:

\begin{theorem}[\protect\cite{Corduneanu}]
\label{thm_comp_criteria_corduneanu} Let $E$ be a Banach space and ${%
\mathcal{C}}(\mathbb{R},E)$ the space of all bounded continuous functions $%
x\colon {\mathbb{R}}\rightarrow E$. For a set $D\subset {\mathcal{C}}({%
\mathbb{R}},E)$ to be relatively compact, it is necessary and sufficient
that:

\begin{enumerate}
\item $D$ is uniformly bounded;

\item functions from $D$ are equicontinuous on every compact subinterval of $%
[0,\infty)$;

\item functions from $D$ are equiconvergent at $+\infty$, that is, given $%
\varepsilon>0$, there exists $T>0$ such that for all $t\ge T$, we have that 
\begin{equation*}
\left\|x(t)-\lim\limits_{t\to \infty} x(t) \right\|<\varepsilon.
\end{equation*}
\end{enumerate}
\end{theorem}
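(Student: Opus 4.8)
The plan is to prove the two implications separately, using throughout that $\mathcal{C}(\mathbb{R},E)$ with the supremum norm is a complete metric space, so that ``relatively compact'' is interchangeable with ``totally bounded'' and with ``every sequence admits a uniformly convergent subsequence''. For concreteness I would run the argument on the half-line $[0,\infty)$, which is the setting needed below; the genuinely two-sided version follows verbatim once the symmetric equiconvergence condition at $-\infty$ is added. Since condition (3) only makes sense for functions admitting a finite limit at $+\infty$, it is understood that the ambient space, and hence $D$, consists of such functions; this subspace of the bounded continuous functions is closed, hence itself complete.

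For necessity I would argue as follows. If $D$ is relatively compact, then $\overline{D}$ is compact, hence bounded, which is (1). For (2) and (3), fix $\varepsilon>0$, cover $\overline{D}$ by finitely many balls of radius $\varepsilon/3$ about $x_1,\dots,x_N\in\overline{D}$, and transport to all of $D$ the uniform continuity of each $x_j$ on a prescribed compact interval (take the smallest of the moduli of continuity) and the convergence $x_j(t)\to \ell_j$ as $t\to\infty$ (take the largest of the thresholds $T_j$), via the usual $\varepsilon/3$ triangle-inequality interpolation.

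The substantive direction is sufficiency. Given a sequence $(x_n)\subset D$, I would first invoke, for each $p\in\mathbb{N}$, the classical Arzel\`a--Ascoli theorem on $[0,p]$ --- legitimate by (1) and (2), where in the case $\dim E=\infty$ one reads (1) as also providing relative compactness of $\{x(t):x\in D\}$ in $E$ for each $t$, which is automatic for the finite-dimensional $E$ used later --- together with a Cantor diagonal extraction, to obtain a single subsequence $(x_{n_k})$ converging uniformly on every compact subinterval of $[0,\infty)$ to a continuous limit $x$. Then I would promote this to uniform convergence on all of $[0,\infty)$: given $\varepsilon>0$, choose $T$ from (3) so that $\|x_n(t)-\ell_n\|<\varepsilon/4$ for all $t\ge T$ and all $n$, with $\ell_n:=\lim_{t\to\infty}x_n(t)$. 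Because $x_{n_k}(T)\to x(T)$ in $E$ while $\|\ell_{n_k}-x_{n_k}(T)\|<\varepsilon/4$, the sequence $(\ell_{n_k})$ is Cauchy in $E$; hence, for $k,j$ large and every $t\ge T$,
\[
\|x_{n_k}(t)-x_{n_j}(t)\|\le \|x_{n_k}(t)-\ell_{n_k}\|+\|\ell_{n_k}-\ell_{n_j}\|+\|\ell_{n_j}-x_{n_j}(t)\|<\varepsilon,
\]
and for $t\in[0,T]$ the same estimate holds for $k,j$ large by the uniform convergence on $[0,T]$. Thus $(x_{n_k})$ is uniformly Cauchy on $[0,\infty)$, so it converges uniformly to a bounded continuous function, and $D$ is relatively compact.

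I expect the only real obstacle to be this last gluing step: conditions (1) and (2) constrain the functions solely on bounded sets, so it is precisely equiconvergence (3) --- together with the mildly delicate point that the limits $\ell_{n_k}$ themselves form a Cauchy sequence in $E$ --- that rules out ``mass escaping to $+\infty$'' and allows one to patch the local uniform convergence with the behaviour near infinity.
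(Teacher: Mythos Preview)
The paper does not supply its own proof of this theorem: it is quoted verbatim from Corduneanu's monograph and used as a black-box compactness criterion, so there is nothing in the paper to compare your argument against.

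That said, your sketch is correct and is the standard proof. The necessity direction via a finite $\varepsilon/3$-net is routine. For sufficiency, the diagonal Arzel\`a--Ascoli extraction on the intervals $[0,p]$ followed by the gluing step is exactly right, and you have identified the genuinely nontrivial point: one must show that the limits $\ell_{n_k}$ themselves form a Cauchy sequence in $E$ before the tail estimate becomes uniform in $k$. Your parenthetical remark about the infinite-dimensional case is also well placed: condition (1) as stated (mere uniform boundedness) does not by itself give pointwise relative compactness of $\{x(t):x\in D\}$ in $E$, so the theorem in this exact formulation is really only complete for finite-dimensional $E$ --- which is all the paper requires, since later it is applied with $E=\mathbb{R}$ (or $\mathbb{R}^2$ once $u$ and $u'$ are tracked simultaneously).
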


To prove the existence of solutions we will consider two different results.
First of all we will use the very well-known Schauder's fixed point theorem:

\begin{theorem}[\protect\cite{Zeidler1986}]
\label{t-Schauder} Let $Y$ be a nonempty, closed, bounded and convex subset
of a Banach space $X$, and suppose that $P\colon Y\rightarrow Y$ is a
compact operator. Then $P$ has at least one fixed point in $Y$.
\end{theorem}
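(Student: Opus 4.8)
The plan is to deduce Schauder's theorem from Brouwer's finite-dimensional fixed point theorem by means of a finite-dimensional approximation (the Schauder projection). First I would exploit the compactness of $P$: the set $K=\overline{P(Y)}$ is a compact subset of $Y$, since $P(Y)\subseteq Y$ and $Y$ is closed, so $\overline{P(Y)}\subseteq\overline{Y}=Y$. Fix $n\in\mathbb{N}$ and cover $K$ by finitely many open balls of radius $1/n$ centred at points $y_1,\dots,y_{k_n}\in K$; let $Y_n$ be the convex hull of $\{y_1,\dots,y_{k_n}\}$. Because each $y_i\in P(Y)\subseteq Y$ and $Y$ is convex, we have $Y_n\subseteq Y$, and $Y_n$ is a compact convex subset of the finite-dimensional space $\mathrm{span}\{y_1,\dots,y_{k_n}\}$.

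Next I would introduce the Schauder projection $p_n\colon K\to Y_n$ defined by
\begin{equation*}
p_n(x)=\frac{\sum_{i=1}^{k_n}\mu_i(x)\,y_i}{\sum_{i=1}^{k_n}\mu_i(x)},\qquad \mu_i(x)=\max\{0,\ \tfrac{1}{n}-\|x-y_i\|\}.
\end{equation*}
The denominator never vanishes because the balls cover $K$, so $p_n$ is continuous, takes values in $Y_n$, and satisfies $\|p_n(x)-x\|<1/n$ for every $x\in K$, since $p_n(x)$ is a convex combination of points $y_i$ each within $1/n$ of $x$. Now set $P_n=p_n\circ P$. Restricted to $Y_n$, the map $P_n$ is continuous and carries the compact convex finite-dimensional set $Y_n$ into itself, so Brouwer's fixed point theorem yields a point $x_n\in Y_n\subseteq Y$ with $x_n=p_n(P(x_n))$; consequently
\begin{equation*}
\|x_n-P(x_n)\|=\|p_n(P(x_n))-P(x_n)\|<\tfrac1n .
\end{equation*}

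Finally I would pass to the limit. The points $P(x_n)$ all lie in the compact set $K$, so along a subsequence $P(x_{n_j})\to z$ for some $z\in K\subseteq Y$; the estimate above then forces $x_{n_j}\to z$ as well. Continuity of $P$ gives $P(x_{n_j})\to P(z)$, whence $z=P(z)$, which is the desired fixed point. The main obstacle is the construction of $p_n$ and the verification of its two crucial properties — continuity and the uniform $1/n$ approximation bound — together with the bookkeeping ensuring $Y_n\subseteq Y$ so that Brouwer's theorem applies inside $Y$; the remaining steps form a routine compactness argument. One should also note that Brouwer's theorem itself is used as the finite-dimensional input.
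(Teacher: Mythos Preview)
The paper does not supply its own proof of this statement: Theorem~\ref{t-Schauder} is quoted from \cite{Zeidler1986} as a known black box and then applied in Theorems~\ref{t-exist1-multipoint} and~\ref{t-exist2-multipoint}. So there is nothing in the paper to compare your argument against.

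Your argument is the standard Schauder-projection reduction to Brouwer's theorem and is correct. One cosmetic slip: you take the centres $y_i$ in $K=\overline{P(Y)}$ but then write ``each $y_i\in P(Y)\subseteq Y$'' when justifying $Y_n\subseteq Y$. The centres lie in $K$, not necessarily in $P(Y)$; however, since $Y$ is closed you already have $K\subseteq Y$, so the conclusion $Y_n\subseteq Y$ still follows and nothing in the proof is affected. It may also be worth saying explicitly that ``compact operator'' here includes continuity of $P$, which you use in the final limit step; in the nonlinear setting this is the standard meaning, so this is only a matter of stating the convention.
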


On the other hand, we will also give a result to prove the existence of
solutions based on the lower and upper solutions technique. To do that we
need to introduce the following definition:

\begin{definition}
A function $\alpha\in X$ is said to be a lower solution of problem %
\eqref{e-multipoint} if 
\begin{equation*}
\left\{%
\begin{split}
& \alpha^{\prime \prime }(t)\ge f(t,\alpha(t),\alpha^{\prime }(t)), \quad
t\in [0,\infty), \\
& \alpha(0)\le 0, \ \alpha^{\prime }(+\infty)\ge
\sum_{i=1}^{m-1}\alpha_i\,\alpha^{\prime }(\xi_i).
\end{split}%
\right.
\end{equation*}
A function $\beta\in X$ is said to be an upper solution of %
\eqref{e-multipoint} if the reversed inequalities hold.
\end{definition}

\section{Main results}

Let us consider 
\begin{equation*}
X=\left\{u\in{\mathcal{C}}^1[0,\infty)\,: \
\sup_{t\in[0,\infty)}|u(t)|<\infty, \ \sup_{t\in[0,\infty)}|u^{\prime
}(t)|<\infty \right\}
\end{equation*}
equipped with the norm 
\begin{equation*}
\|u\|=\max\left\{\|u\|_\infty, \ \|u^{\prime }\|_\infty \right\},
\end{equation*}
where $\|v\|_\infty=\sup_{t\in[0,\infty)} |v(t)|$. It is easy to prove that $%
(X,\, \|\cdot\|)$ is a Banach space.

Consider the following integral operator $T\colon X\rightarrow X$ defined by 
\begin{equation}
Tu(t)=\int_{0}^{\infty }G(t,s)\,\left( f(s,u(s),u^{\prime }(s))+k\,u^{\prime
}(s)+M\,u(s)\right) \,\mbox{\,$\rm d$}s.  \label{e-int-T}
\end{equation}%
It is clear that solutions of problem \eqref{e-multipoint} are fixed points
of operator $T$.

Moreover, we will assume that at least one of the two following conditions
holds:

\begin{itemize}
\item[$(H_1)$] The nonlinearity $f:[0,\infty) \times {\mathbb{R}}^2
\rightarrow {\mathbb{R}}$ satisfies $L^1$-Carath\'eodory condition, that is,

\begin{itemize}
\item[(i)] $f(\cdot,u,v)$ is measurable for each $(u,v)$ fixed.

\item[(ii)] $f(t,\cdot,\cdot)$ is continuous for a.\,e. $t\in[0,\infty)$.

\item[(iii)] For each $r>0$ there exists $\varphi _{r}\in L^{1}[0,\infty )$
such that 
\begin{equation*}
\left\vert f(t,u,v)\right\vert \leq \varphi _{r}(t),\quad \forall \,(u,v)\in
(-r,r)\times (-r,r),\ \text{a.\thinspace e. }t\in \lbrack 0,\infty ).
\end{equation*}
\end{itemize}

\item[$(H_2)$] The nonlinearity $f:[0,\infty) \times {\mathbb{R}}^2
\rightarrow {\mathbb{R}}$ satisfies $L^\infty$-Carath\'eodory condition,
that is,

\begin{itemize}
\item[(i)] $f(\cdot,u,v)$ is measurable for each $(u,v)$ fixed.

\item[(ii)] $f(t,\cdot,\cdot)$ is continuous for a.\,e. $t\in[0,\infty)$.

\item[(iii)] For each $r>0$ there exists $\phi _{r}\in L^{\infty }[0,\infty
) $ such that 
\begin{equation*}
\left\vert f(t,u,v)\right\vert \leq \phi _{r}(t),\quad \forall \,(u,v)\in
(-r,r)\times (-r,r),\ \text{a.\thinspace e. }t\in \lbrack 0,\infty ).
\end{equation*}
\end{itemize}
\end{itemize}

Under one of these conditions we will be able to prove the following result.

\begin{lemma}
\label{l-T-comp-cont-multipoint} Assume that either $(H_1)$ or $(H_2)$
holds. Then operator $T$ defined in \eqref{e-int-T} is completely continuous.
\end{lemma}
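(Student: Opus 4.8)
The strategy is the standard three-part argument for complete continuity of an integral operator on an unbounded domain: (a) check that $T$ maps $X$ into $X$ and is well-defined; (b) show $T$ is continuous; (c) show $T$ maps bounded sets to relatively compact sets, using the Corduneanu-type compactness criterion (Theorem~\ref{thm_comp_criteria_corduneanu}) rather than Arzel\`a--Ascoli, since the domain is $[0,\infty)$. Throughout, the key quantitative input is Remark~\ref{r-bound-G-Gprima}: both $|G(t,s)|$ and $|\partial_t G(t,s)|$ are bounded by $C_i\,e^{-k(t+s)/2}$, so $G(t,\cdot)$ and $\partial_tG(t,\cdot)$ lie in $L^1\cap L^\infty$ with norms controlled uniformly in $t$ by a constant times $e^{-kt/2}\le 1$.

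First I would fix $u\in X$, set $r=\|u\|$, and note that under $(H_1)$ the integrand is dominated by $\big(\varphi_r(s)+k\,r+M\,r\big)$; the term $\varphi_r\in L^1$ is paired with $G(t,\cdot)\in L^\infty$, while the constant $k\,r+M\,r$ is paired with $G(t,\cdot)\in L^1$, so $Tu(t)$ is finite and, using $|G(t,s)|\le C_1e^{-k(t+s)/2}$, we get $|Tu(t)|\le C_1 e^{-kt/2}\big(\|\varphi_r\|_1+(k+M)r\int_0^\infty e^{-ks/2}\,ds\big)$, bounded in $t$; differentiating under the integral sign (justified by the same domination, with $\partial_t G$ in place of $G$) gives the analogous bound for $(Tu)'$. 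Under $(H_2)$ the argument is symmetric: $\phi_r\in L^\infty$ pairs with $G(t,\cdot)\in L^1$. Either way $Tu\in\mathcal{C}^1[0,\infty)$ with $\|Tu\|<\infty$, so $T\colon X\to X$. I would record the uniform estimate: for $\|u\|\le r$, $\|Tu\|\le K_r$ for a constant $K_r$ independent of $u$, which already gives that $T$ maps bounded sets to bounded sets (condition~1 of Theorem~\ref{thm_comp_criteria_corduneanu}).

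For continuity, take $u_n\to u$ in $X$; then $u_n,u$ lie in a common ball of radius $r$, and for a.e.\ $s$ the continuity hypothesis (ii) gives $f(s,u_n(s),u_n'(s))+k\,u_n'(s)+M\,u_n(s)\to f(s,u(s),u'(s))+k\,u'(s)+M\,u(s)$ pointwise, with the whole sequence dominated by an integrable (resp.\ essentially bounded) function times the $L^\infty$ (resp.\ $L^1$) kernel. Dominated convergence then yields $Tu_n(t)\to Tu(t)$ and $(Tu_n)'(t)\to (Tu)'(t)$; to upgrade pointwise convergence to convergence in the sup-norm I would again use the factor $e^{-kt/2}$ coming from Remark~\ref{r-bound-G-Gprima} to split $[0,\infty)$ into $[0,T]$, where the convergence is uniform by an equicontinuity/dominated-convergence argument, and $(T,\infty)$, where the uniform bound $K_r e^{-kt/2}$ on $\|Tu_n-Tu\|$ is small. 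For compactness I would verify the remaining two conditions of Theorem~\ref{thm_comp_criteria_corduneanu} on a bounded set $B\subset X$, $\|u\|\le r$ for $u\in B$: equicontinuity on each $[0,T]$ follows from the continuity of $t\mapsto G(t,s)$ and $t\mapsto \partial_t G(t,s)$ together with the domination (split the difference $Tu(t_1)-Tu(t_2)$ using $|G(t_1,s)-G(t_2,s)|$, which is small uniformly in $s$ on compacts, controlling the tail by the exponential factor); and equiconvergence at $+\infty$ is immediate because $\|Tu(t)\|\le K_r e^{-kt/2}\to 0$ as $t\to\infty$, so $\lim_{t\to\infty}Tu(t)=0$ and the convergence is uniform in $u\in B$.

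The main obstacle is the passage from pointwise to uniform statements on the noncompact interval $[0,\infty)$ — both in the continuity step and in equicontinuity — which is exactly where the exponential decay $e^{-k(t+s)/2}$ of the Green's function does the essential work: it simultaneously forces equiconvergence to $0$ at infinity (trivializing condition~3) and lets one cut off the tail uniformly. A secondary technical point is keeping the two hypotheses $(H_1)$ and $(H_2)$ in parallel by always pairing the $L^1$ object with the $L^\infty$ object and vice versa; once that bookkeeping is set up, no step is genuinely delicate.
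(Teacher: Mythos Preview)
Your proposal is correct and follows essentially the same three-step scheme as the paper (well-definedness, continuity, and relative compactness via Theorem~\ref{thm_comp_criteria_corduneanu}), relying throughout on the exponential bound $|G|,|\partial_t G|\le C_i e^{-k(t+s)/2}$ from Remark~\ref{r-bound-G-Gprima}. One small simplification worth noting: in the continuity step the paper does not split $[0,\infty)$ into a compact piece and a tail, but simply bounds $e^{-kt/2}\le 1$ to get $|Tu_n(t)-Tu(t)|\le C_1\int_0^\infty e^{-ks/2}\big(|f(s,u_n,u_n')-f(s,u,u')|+k|u_n'-u'|+M|u_n-u|\big)\,ds$, a $t$-independent majorant to which dominated convergence applies directly, yielding $\|Tu_n-Tu\|_\infty\to 0$ in one stroke.
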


\begin{proof}
The proof will be divided into several steps.

\vspace*{3pt} \noindent \underline{Step 1}: $T$ is well-defined in $X$.

\vspace*{1pt} Given an arbitrary $u\in X$, we will prove that $Tu\in X$.

First, we will make the proof in case hypothesis $(H_{1})$ holds. If $u\in X$%
, then there exists some $r>0$ such that $\Vert u\Vert <r$. Therefore, it
holds that 
\begin{equation}
\begin{split}
|Tu(t)|=& \left\vert \int_{0}^{\infty }G(t,s)\,\left( f(s,u(s),u^{\prime
}(s))+k\,u^{\prime }(s)+M\,u(s)\right) \,\mbox{\,$\rm d$}s\right\vert \\
\leq & \,\int_{0}^{\infty }\left\vert G(t,s)\right\vert \,\left( \left\vert
f(s,u(s),u^{\prime }(s))\right\vert +k\,\left\vert u^{\prime }(s)\right\vert
+M\,\left\vert u(s)\right\vert \right) \,\mbox{\,$\rm d$}s \\
\leq & \,\int_{0}^{\infty }\left\vert G(t,s)\right\vert \,\left( \varphi
_{r}(s)+(k+M)\,r\right) \,\mbox{\,$\rm d$}s \\
\leq & \,\int_{0}^{\infty }C_{1}\,e^{-\frac{k(t+s)}{2}}\,\left( \varphi
_{r}(s)+(k+M)\,r\right) \,\mbox{\,$\rm d$}s \\
=& \,C_{1}\,e^{-\frac{kt}{2}}\left( \int_{0}^{\infty }e^{-\frac{ks}{2}%
}\,\varphi _{r}(s)\,\mbox{\,$\rm d$}s+\frac{2}{k}\,(k+M)\,r\right) \\
=& \,C_{1}\,e^{-\frac{kt}{2}}\left( \int_{0}^{\infty }e^{-\frac{ks}{2}%
}\,\varphi _{r}(s)\,\mbox{\,$\rm d$}s+\left( 2+\frac{2\,M}{k}\right)
\,r\right)
\end{split}
\label{ec-bound-Tu}
\end{equation}%
and, analogously, 
\begin{equation}
\begin{split}
\left\vert (Tu)^{\prime }(t)\right\vert =& \left\vert \int_{0}^{\infty }%
\frac{\partial \,G}{\partial \,t}(t,s)\,\left( f(s,u(s),u^{\prime
}(s))+k\,u^{\prime }(s)+M\,u(s)\right) \,\mbox{\,$\rm d$}s\right\vert \\
\leq & \,\int_{0}^{\infty }\left\vert \frac{\partial \,G}{\partial \,t}%
(t,s)\right\vert \,\left( \left\vert f(s,u(s),u^{\prime }(s))\right\vert
+k\,\left\vert u^{\prime }(s)\right\vert +M\,\left\vert u(s)\right\vert
\right) \,\mbox{\,$\rm d$}s \\
\leq & \,\int_{0}^{\infty }\left\vert \frac{\partial \,G}{\partial \,t}%
(t,s)\right\vert \,\left( \varphi _{r}(s)+(k+M)\,r\right) \,\mbox{\,$\rm d$}s
\\
\leq & \,\int_{0}^{\infty }C_{2}\,e^{-\frac{k(t+s)}{2}}\,\left( \varphi
_{r}(s)+(k+M)\,r\right) \,\mbox{\,$\rm d$}s \\
=& \,C_{2}\,e^{-\frac{kt}{2}}\left( \int_{0}^{\infty }e^{-\frac{ks}{2}%
}\,\varphi _{r}(s)\,\mbox{\,$\rm d$}s+\left( 2+\frac{2\,M}{k}\right)
\,r\right) .
\end{split}
\label{ec-bound-Tuprima}
\end{equation}%
Now, since $\varphi _{r}\in L^{1}[0,\infty )$ and $e^{-\frac{ks}{2}}\in
L^{\infty }[0,\infty )$, it holds that $\varphi _{r}(s)\,e^{-\frac{ks}{2}%
}\in L^{1}[0,\infty )$. Thus, it is clear that 
\begin{equation*}
\sup_{t\in \lbrack 0,\infty )}|Tu(t)|<\infty \quad \text{and}\quad
\sup_{t\in \lbrack 0,\infty )}\left\vert (Tu)^{\prime }(t)\right\vert
<\infty ,
\end{equation*}%
that is, $Tu\in X$.

On the other hand, if $(H_{2})$ holds instead of $(H_{1})$, following
similar steps to the previous case, we obtain the following upper bounds: 
\begin{equation*}
|Tu(t)|\leq \,C_{1}\,e^{-\frac{kt}{2}}\left( \int_{0}^{\infty }e^{-\frac{ks}{%
2}}\,\phi _{r}(s)\,\mbox{\,$\rm d$}s+\left( 2+\frac{2\,M}{k}\right)
\,r\right)
\end{equation*}%
and 
\begin{equation*}
|(Tu)^{\prime }(t)|\leq \,C_{2}\,e^{-\frac{kt}{2}}\left( \int_{0}^{\infty
}e^{-\frac{ks}{2}}\,\phi _{r}(s)\,\mbox{\,$\rm d$}s+\left( 2+\frac{2\,M}{k}%
\right) \,r\right) .
\end{equation*}

In this case $\phi _{r}\in L^{\infty }[0,\infty )$ and, since $e^{-\frac{ks}{2%
}}\in L^{1}[0,\infty )$, we obtain that $\phi_{r}(s)\,e^{-\frac{ks}{2}}\in
L^{1}[0,\infty )$. Therefore we conclude again that $Tu\in X$.

\vspace*{3pt} \noindent \underline{Step 2}: $T$ is a continuous operator.

\vspace*{1pt} We will detail the proof for the case in which $(H_{1})$
holds. For $(H_{2})$ the proof will be analogous, with the obvious changes, as it occurred in Step 1.

Consider the sequence $\{u_{n}\}_{n\in {\mathbb{N}}}$ and assume that it
converges to $u$ in $X$, that is, 
\begin{equation*}
\lim\limits_{n\rightarrow \infty }u_{n}(t)=u(t)\quad \text{and}\quad
\lim\limits_{n\rightarrow \infty }u_{n}^{\prime }(t)=u^{\prime }(t)\quad 
\text{for all }t\in \lbrack 0,\infty ).
\end{equation*}%
Then, since $f(t,\cdot ,\cdot )$ is continuous for a.\thinspace e. $t\in
\lbrack 0,\infty )$, it is deduced that 
\begin{equation*}
\lim\limits_{n\rightarrow \infty }f(s,u_{n}(s),u_{n}^{\prime
}(s))=f(s,u(s),u^{\prime }(s)) \quad \text{for a.\,e. } s\in[0,\infty).
\end{equation*}%
Let's see that $\{Tu_{n}\}_{n\in {\mathbb{N}}}$ converges to $Tu$.

Since $\{u_{n}\}_{n\in {\mathbb{N}}}$ is convergent in $X$, then there
exists some $r>0$ such that $\Vert u_{n}\Vert <r$ for all $n\in {\mathbb{N}}$. Now, if $(H_{1})$ holds, 
\begin{equation*}\begin{split}
\left\vert Tu_{n}(t)-Tu(t)\right\vert  \leq& \, \int_{0}^{\infty }\left\vert
G(t,s)\right\vert \, \left\vert f(s,u_{n}(s),u_{n}^{\prime
}(s))-f(s,u(s),u'(s))\right\vert \dif s\\[2pt]
& \, + \int_{0}^{\infty } \left\vert
G(t,s)\right\vert \, \left(k\,|u_{n}^{\prime }(s)-u'(s)|+M\,|u_{n}(s)-u(s)|\right) 
\dif s \\[2pt]
\leq& \, C_1 \int_{0}^{\infty } \negthinspace e^{-\frac{k(t+s)}{2}} \left\vert f(s,u_{n}(s),u_{n}^{\prime
}(s))-f(s,u(s),u'(s))\right\vert \dif s\\[2pt]
& \, +  C_1 \int_{0}^{\infty } \negthinspace e^{-\frac{k(t+s)}{2}} \left(k\,|u_{n}^{\prime }(s)-u'(s)|+M\,|u_{n}(s)-u(s)|\right) 
\dif s  \\[2pt]
\le & \, C_1 \int_{0}^{\infty } e^{-\frac{ks}{2}} \left(2 \,\varphi_r(s)+2\,(k+M)\,r \right) \dif s <\infty.
\end{split}\end{equation*}

Then, we deduce from Lebesgue's Dominated Convergence Theorem that
\begin{equation*}\begin{split}
\lim\limits_{n\to \infty} \left\|Tu_n - Tu\right\|_\infty \le & \,\lim\limits_{n\to \infty} C_1 \int_{0}^{\infty }  e^{-\frac{ks}{2}} \left\vert f(s,u_{n}(s),u_{n}^{\prime
}(s))-f(s,u(s),u'(s))\right\vert \dif s\\[2pt]
& \, + \lim\limits_{n\to \infty} C_1  \int_{0}^{\infty } e^{-\frac{ks}{2}} \left(k\,|u_{n}^{\prime }(s)-u'(s)|+M\,|u_{n}(s)-u(s)|\right) 
\dif s \\[2pt]
= & \, C_1 \int_{0}^{\infty } \lim\limits_{n\to \infty} e^{-\frac{ks}{2}} \left\vert f(s,u_{n}(s),u_{n}^{\prime}(s))-f(s,u(s),u'(s))\right\vert \dif s\\[2pt]
& \, + C_1 \int_{0}^{\infty } \lim\limits_{n\to \infty} e^{-\frac{ks}{2}} \left(k\,|u_{n}^{\prime }(s)-u'(s)|+M\,|u_{n}(s)-u(s)|\right) \dif s =0.
\end{split}\end{equation*}

Analogously, we get that
\begin{equation*}\begin{split}
\lim\limits_{n\to \infty} \left\|(Tu_n)' - (Tu)'\right\|_\infty \le & \, C_2 \int_{0}^{\infty } \lim\limits_{n\to \infty} e^{-\frac{ks}{2}} \left\vert f(s,u_{n}(s),u_{n}^{\prime}(s))-f(s,u(s),u'(s))\right\vert \dif s\\[2pt]
& \, + C_2 \int_{0}^{\infty } \lim\limits_{n\to \infty} e^{-\frac{ks}{2}} \left(k\,|u_{n}^{\prime }(s)-u'(s)|+M\,|u_{n}(s)-u(s)|\right) \dif s =0.
\end{split}\end{equation*}

Thus, $\left\{ Tu_{n}\right\} _{n\in {\mathbb{N}}}$ converges to $Tu$ in $X$.

\vspace*{3pt} \noindent \underline{Step 3}: $T$ is compact.

\vspace*{1pt} Again, we will make the proof only for the case in which $%
(H_{1})$ holds, being the other one analogous.

Let $B$ be a bounded subset of $X$, that is, there exists some $r>0$ such
that $\Vert u\Vert <r$, for all $u\in B$. Let us see that $T(B)$ is
relatively compact in $X$.

\begin{description}
\item[(i)] $T(B)$ is uniformly bounded:\newline
If $u\in B$, then, for $t\in \lbrack 0,\infty )$, 
\begin{equation*}\begin{split}
|Tu(t)|&\leq C_{1}\,e^{-\frac{kt}{2}}\,\left( \int_{0}^{\infty }e^{-\frac{ks}{2}}\, \varphi _{r}(s)\, \mbox{\,$\rm d$}s+\left( 2+\frac{2\,M}{k}\right)
\,r\right)\\[2pt]
&\le  C_{1}\,\left( \int_{0}^{\infty }e^{-\frac{ks}{2}}\,\varphi _{r}(s)\,\mbox{\,$\rm d$}s+\left( 2+\frac{2\,M}{k}\right)
\,r\right)=:M_{1}>0
\end{split}\end{equation*}
and 
\begin{equation*}\begin{split}
|(Tu)^{\prime }(t)|&\leq C_{2}\,e^{-\frac{kt}{2}}\,\left( \int_{0}^{\infty
}e^{-\frac{ks}{2}}\,\varphi _{r}(s)\,\mbox{\,$\rm d$}s+\left( 2+\frac{2\,M}{k}\right) \,r\right) \\[2pt] 
&\leq C_{2}\,\left( \int_{0}^{\infty}e^{-\frac{ks}{2}}\,\varphi _{r}(s)\, \mbox{\,$\rm d$}s+\left( 2+\frac{2\,M}{k}\right) \,r\right) =:M_{2}>0.
\end{split}\end{equation*}%
Thus, 
\begin{equation*}
\Vert Tu\Vert \leq \max \{M_{1},\,M_{2}\},
\end{equation*}%
for all $u\in B$, that is, $T(B)$ is uniformly bounded.

\item[(ii)] $T(B)$ is equicontinuous:\newline
Let $t_{1},\,t_{2}\in \lbrack 0,L]$ for some $L>0$ and assume that $%
t_{1}>t_{2}$. Then, 
\begin{equation}
\begin{split}
\left\vert Tu(t_{1})-Tu(t_{2})\right\vert \leq & \int_{0}^{\infty
}\left\vert G(t_{1},s)-G(t_{2},s)\right\vert \,\left( \left\vert
f(s,u(s),u^{\prime }(s))\right\vert +k\,\left\vert u^{\prime }(s)\right\vert
+M\,\left\vert u(s)\right\vert \right) \mbox{\,$\rm d$}s \\
\leq & \,\int_{0}^{\infty }\left\vert G(t_{1},s)-G(t_{2},s)\right\vert
\,\left( \varphi _{r}(s)+(k+M)\,r\right) \mbox{\,$\rm d$}s \\
=& \,\int_{0}^{t_{2}}\left\vert G(t_{1},s)-G(t_{2},s)\right\vert \,\left(
\varphi _{r}(s)+(k+M)\,r\right) \mbox{\,$\rm d$}s \\
& +\int_{t_{2}}^{t_{1}}\left\vert G(t_{1},s)-G(t_{2},s)\right\vert \,\left(
\varphi _{r}(s)+(k+M)\,r\right) \mbox{\,$\rm d$}s \\
& +\int_{t_{1}}^{\infty }\left\vert G(t_{1},s)-G(t_{2},s)\right\vert
\,\left( \varphi _{r}(s)+(k+M)\,r\right) \mbox{\,$\rm d$}s.
\end{split}
\label{e:equiconv_1}
\end{equation}%
We will find some suitable upper bounds for the difference $%
|G(t_{1},s)-G(t_{2},s)|$. For $0\leq t_{2}<t_{1}\leq s$, we have two
possibilities:

\begin{itemize}
\item If $\xi _{l-1}\leq s<\xi _{l}$ for some $2\leq l\leq m-1$, then 
\begin{equation*}
|G(t_{1},s)-G(t_{2},s)|=\frac{1}{\gamma }\,|h_{l}(s)|\,e^{-\frac{k\,s}{2}%
}\,\left\vert -e^{-\frac{k\,t_{1}}{2}}\,\sin (\gamma \,t_{1})+e^{-\frac{%
k\,t_{2}}{2}}\,\sin (\gamma \,t_{2})\right\vert .
\end{equation*}

\item If $s\geq \xi _{m-1}$, then 
\begin{equation*}
|G(t_{1},s)-G(t_{2},s)|=0.
\end{equation*}%
\end{itemize}

On the other hand, for $0\leq s\leq t_{2}<t_{1}$:
\begin{itemize}
\item If $\xi _{l-1}\leq s<\xi _{l}$ for some $2\leq l\leq m-1$, then 
\begin{equation*}
\begin{split}
|G(t_{1},s)-G(t_{2},s)|\leq & \,\frac{1}{\gamma }\,|h_{l}(s)|\,e^{-\frac{k\,s%
}{2}}\,\left\vert -e^{-\frac{k\,t_{1}}{2}}\,\sin (\gamma \,t_{1})+e^{-\frac{%
k\,t_{2}}{2}}\,\sin (\gamma \,t_{2})\right\vert \\
& +\frac{1}{\gamma }\,e^{-\frac{k\,s}{2}}\left\vert e^{-\frac{k\,t_{1}}{2}%
}\,\sin (\gamma \,(s-t_{1}))-e^{-\frac{k\,t_{2}}{2}}\,\sin (\gamma
\,(s-t_{2}))\right\vert .
\end{split}%
\end{equation*}%
Last term in the previous sum can be upperly bounded as follows 
\begin{equation*}
\begin{split}
& \left\vert e^{-\frac{k\,t_{1}}{2}}\,\sin (\gamma \,(s-t_{1}))-e^{-\frac{%
k\,t_{2}}{2}}\,\sin (\gamma \,(s-t_{2}))\right\vert \\
=& |\sin (\gamma \,s)|\,\left\vert e^{-\frac{k\,t_{1}}{2}}\,\cos (\gamma
\,t_{1})-e^{-\frac{k\,t_{2}}{2}}\,\cos (\gamma \,t_{2})\right\vert \\
& +|\cos (\gamma \,s)|\,\left\vert -e^{-\frac{k\,t_{1}}{2}}\,\sin (\gamma
\,t_{1})+e^{-\frac{k\,t_{2}}{2}}\,\sin (\gamma \,t_{2})\right\vert \\
\leq & \,\left\vert e^{-\frac{k\,t_{1}}{2}}\,\cos (\gamma \,t_{1})-e^{-\frac{%
k\,t_{2}}{2}}\,\cos (\gamma \,t_{2})\right\vert \\
& +\,\left\vert -e^{-\frac{k\,t_{1}}{2}}\,\sin (\gamma \,t_{1})+e^{-\frac{%
k\,t_{2}}{2}}\,\sin (\gamma \,t_{2})\right\vert .
\end{split}%
\end{equation*}%
As a consequence, 
\begin{equation*}
\begin{split}
|G(t_{1},s)-G(t_{2},s)|\leq & \,\frac{1}{\gamma }\,\left(
|h_{l}(s)|+1\right) \,e^{-\frac{k\,s}{2}}\,\left\vert -e^{-\frac{k\,t_{1}}{2}%
}\,\sin (\gamma \,t_{1})+e^{-\frac{k\,t_{2}}{2}}\,\sin (\gamma
\,t_{2})\right\vert \\
& +\frac{1}{\gamma }\,e^{-\frac{k\,s}{2}}\left\vert e^{-\frac{k\,t_{1}}{2}%
}\,\cos (\gamma \,t_{1})-e^{-\frac{k\,t_{2}}{2}}\,\cos (\gamma
\,t_{2})\right\vert .
\end{split}%
\end{equation*}%
\item If $s\geq \xi _{m-1}$, then 
\begin{equation*}
\begin{split}
|G(t_{1},s)-G(t_{2},s)|=& \frac{1}{\gamma }\,e^{-\frac{k\,s}{2}}\left\vert
e^{-\frac{k\,t_{1}}{2}}\,\sin (\gamma \,(s-t_{1}))-e^{-\frac{k\,t_{2}}{2}%
}\,\sin (\gamma \,(s-t_{2}))\right\vert \\
\leq & \,\frac{1}{\gamma }\,e^{-\frac{k\,s}{2}}\left\vert e^{-\frac{k\,t_{1}%
}{2}}\,\cos (\gamma \,t_{1})-e^{-\frac{k\,t_{2}}{2}}\,\cos (\gamma
\,t_{2})\right\vert \\
& +\,\frac{1}{\gamma }\,e^{-\frac{k\,s}{2}}\left\vert -e^{-\frac{k\,t_{1}}{2}%
}\,\sin (\gamma \,t_{1})+e^{-\frac{k\,t_{2}}{2}}\,\sin (\gamma
\,t_{2})\right\vert .
\end{split}%
\end{equation*}%
Therefore, we can affirm that for a given $\varepsilon >0$ there exists some 
$\delta >0$ such that if $|t_{1}-t_{2}|<\delta $ then, for $s\in \lbrack
0,t_{2})\cup (t_{1},\infty )$, it holds that 
\begin{equation*}
|G(t_{1},s)-G(t_{2},s)|\leq \varepsilon \,e^{-\frac{k\,s}{2}}.
\end{equation*}%
This implies that the first and third terms of the last part of inequality %
\eqref{e:equiconv_1} tend to zero with independence of the function $u\in B$.
\end{itemize}

On the other hand, for $0\leq t_{2}\leq s\leq t_{1}$:
\begin{itemize}
\item If $\xi _{l-1}\leq s<\xi _{l}$ for some $2\leq l\leq m-1$, then 
\begin{equation*}
\begin{split}
|G(t_{1},s)-G(t_{2},s)|\leq & \,\frac{1}{\gamma }\,e^{-\frac{ks}{2}%
}\,|h_{l}(s)|\left\vert -e^{-\frac{kt_{1}}{2}}\,\sin (\gamma t_{1})+e^{-%
\frac{kt_{2}}{2}}\,\sin (\gamma t_{2})\right\vert \\
& +\frac{1}{\gamma }\,e^{-\frac{ks}{2}}\,\left\vert e^{-\frac{kt_{1}}{2}%
}\,\sin (\gamma (s-t_{1}))\right\vert .
\end{split}%
\end{equation*}

\item If $s\geq \xi _{m-1}$, then%
\begin{equation*}
|G(t_{1},s)-G(t_{2},s)|=\frac{1}{\gamma }\,e^{-\frac{ks}{2}}\,\left\vert e^{-%
\frac{kt_{1}}{2}}\,\sin (\gamma (s-t_{1}))\right\vert .
\end{equation*}%
\end{itemize}
Thus, when $s\in \lbrack t_{2},t_{1}]$, it holds that 
\begin{equation*}
|G(t_{1},s)-G(t_{2},s)|\leq C\,e^{-\frac{ks}{2}},
\end{equation*}%
for some constant $C$. This implies that $|G(t_{1},\cdot )-G(t_{2},\cdot
)|\left( \varphi _{r}(\cdot )+(k+M)\,r\right) \in L^{1}[t_{1},t_{2}]$ for
any $t_{1},\,t_{2}\in \lbrack 0,\infty )$. Then it is clear that 
\begin{equation*}
\int_{t_{2}}^{t_{1}}\left\vert G(t_{1},s)-G(t_{2},s)\right\vert \,\left(
\varphi _{r}(s)+(k+M)\,r\right) \mbox{\,$\rm d$}s\xrightarrow[t_1\to t_2]{}0
\end{equation*}%
with independence of the function $u\in B$.

Thus we conclude that given $\varepsilon >0$ there exists $\delta
(\varepsilon )>0$ such that if $|t_{1}-t_{2}|<\delta $, then $%
|Tu(t_{1})-Tu(t_{2})|<\varepsilon $ for all $u\in B$.

In a completely analogous way, finding suitable upper bounds for $\left\vert 
\frac{\partial \,G}{\partial \,t}(t_{1},s)-\frac{\partial \,G}{\partial \,t}%
(t_{2},s)\right\vert $, it is possible to prove that given $\varepsilon >0$
there exists $\delta (\varepsilon )>0$ such that if $|t_{1}-t_{2}|<\delta $,
then $|(Tu)^{\prime }(t_{1})-(Tu)^{\prime }(t_{2})|<\varepsilon $ for all $%
u\in B$. 

Therefore, $T(B)$ is equicontinuous.

\item[(iii)] $T(B)$ is equiconvergent at $\infty $:\newline
Given $u\in B$, it holds that 
\begin{equation*}\begin{split}
\left\vert Tu(t)-\lim\limits_{t\rightarrow \infty }Tu(t)\right\vert
\le & \,  C_{1}\,e^{-\frac{kt}{2}}\,\left( \int_{0}^{\infty }e^{-\frac{ks}{2}}\,\varphi _{r}(s)\,\dif s+\left( 2+\frac{2\,M}{k}\right)
\,r\right) \\[2pt]
&+ \lim\limits_{t\rightarrow \infty } C_{1}\,e^{-\frac{kt}{2}}\,\left( \int_{0}^{\infty }e^{-\frac{ks}{2}}\,\varphi _{r}(s)\,\dif s+\left( 2+\frac{2\,M}{k}\right)
\,r\right) \\[2pt]
\leq &\, C_{1}\,e^{-\frac{kt}{2}}\,\left( \int_{0}^{\infty }e^{-\frac{ks%
	}{2}}\,\varphi _{r}(s)\,\dif s+\left( 2+\frac{2\,M}{k}\right)
\,r\right) \xrightarrow[t\to \infty]{}0
\end{split}\end{equation*}
and 
\begin{equation*}\begin{split}
\left\vert (Tu)^{\prime }(t)-\lim\limits_{t\rightarrow \infty }(Tu)^{\prime
}(t)\right\vert \leq & \, C_{2}\,e^{-\frac{kt}{2}}\,\left(
\int_{0}^{\infty }e^{-\frac{ks}{2}}\,\varphi _{r}(s)\,\mbox{\,$\rm d$}%
s+\left( 2+\frac{2\,M}{k}\right) \,r\right) \\[2pt]
& + \lim\limits_{t\rightarrow \infty } C_{2}\,e^{-\frac{kt}{2}}\,\left(
\int_{0}^{\infty }e^{-\frac{ks}{2}}\,\varphi _{r}(s)\,\mbox{\,$\rm d$}%
s+\left( 2+\frac{2\,M}{k}\right) \,r\right) \\[2pt]
\leq & \, C_{2}\,e^{-\frac{kt}{2}}\,\left(
\int_{0}^{\infty }e^{-\frac{ks}{2}}\,\varphi _{r}(s)\,\mbox{\,$\rm d$}%
s+\left( 2+\frac{2\,M}{k}\right) \,r\right) \xrightarrow[t\to \infty]{}0,
\end{split}\end{equation*}
that is, $TB$ is equiconvergent at $\infty $.\newline
Therefore, from Theorem \ref{thm_comp_criteria_corduneanu}, we conclude that 
$T(B)$ is relatively compact in $X$.
\end{description}
\end{proof}

Now we will see our existence results.

\begin{theorem}
\label{t-exist1-multipoint} Let $f:[0,\infty )\times {\mathbb{R}}%
^{2}\rightarrow {\mathbb{R}}$ be such that there exists $t_{0}\in \lbrack
0,\infty )$ for which $f(t_{0},0,0)\neq 0$. Moreover, suppose that, for $%
C_{1}$ and $C_{2}$ given in Remark \ref{r-bound-G-Gprima}, either

\begin{itemize}
\item $(H_{1})$ holds and, moreover, there exists some $R>0$ such that 
\begin{equation}
\begin{split}
\max \{C_{1},\,C_{2}\}\max \left\{ \sup_{t>\xi _{m-1}}e^{-\frac{k\,t}{2}%
}\int_{0}^{t}e^{-\frac{k\,s}{2}}\,\varphi _{R}(s)\,\dif s,\,\int_{0}^{\xi
_{m-1}}e^{-\frac{k\,s}{2}}\,\varphi _{R}(s)\,\dif s\right\} &  \\
+\max \{C_{1},\,C_{2}\}\,\max \left\{ \frac{1}{2},\,2\left( 1-e^{-\frac{%
k\,\xi _{m-1}}{2}}\right) \right\} \left( 1+\frac{M}{k}\right) \,R& <R,
\end{split}
\label{H1}
\end{equation}
\end{itemize}
or
\begin{itemize}
\item $(H_{2})$ holds and, moreover, there is $R>0$ such that 
\begin{equation*}
\begin{split}
\max \{C_{1},\,C_{2}\}\max \left\{ \sup_{t>\xi _{m-1}}e^{-\frac{k\,t}{2}%
}\int_{0}^{t}e^{-\frac{k\,s}{2}}\,\phi _{R}(s)\,\dif s,\,\int_{0}^{\xi
_{m-1}}e^{-\frac{k\,s}{2}}\,\phi _{R}(s)\,\dif s\right\} &  \\
+\max \{C_{1},\,C_{2}\}\,\max \left\{ \frac{1}{2},\,2\left( 1-e^{-\frac{%
k\,\xi _{m-1}}{2}}\right) \right\} \left( 1+\frac{M}{k}\right) \,R& <R.
\end{split}%
\end{equation*}
\end{itemize}

Then problem \eqref{e-multipoint} has at least a nontrivial solution.
\end{theorem}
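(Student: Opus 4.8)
The plan is to apply Schauder's fixed point theorem (Theorem~\ref{t-Schauder}) to the operator $T$ of \eqref{e-int-T} restricted to the closed ball $Y=\{u\in X:\|u\|\le R\}$, where $R$ is the number furnished by \eqref{H1} under $(H_1)$ (resp.\ by its analogue under $(H_2)$). The set $Y$ is nonempty, convex, bounded and closed, and by Lemma~\ref{l-T-comp-cont-multipoint} the map $T$ is completely continuous, hence a compact operator on $Y$ as soon as $T(Y)\subseteq Y$. So the whole proof reduces to establishing this invariance; once it holds, Schauder's theorem yields $u\in Y$ with $Tu=u$, and since fixed points of $T$ coincide with the solutions of \eqref{e-multipoint}, $u$ solves the problem. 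For the nontriviality, I would argue at the end that $u\equiv 0$ is impossible: a solution identically equal to zero forces $0=u''(t)=f(t,0,0)$ for a.e.\ $t\in[0,\infty)$, contradicting the assumption that $f(t_0,0,0)\neq 0$ for some $t_0$.

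To prove $T(Y)\subseteq Y$, fix $u\in Y$; then $\|u\|\le R$ and, under $(H_1)$, $|f(s,u(s),u'(s))|\le\varphi_R(s)$ a.e. From \eqref{e-int-T} one gets
\begin{equation*}
|Tu(t)|\le\int_0^\infty|G(t,s)|\bigl(\varphi_R(s)+(k+M)\,R\bigr)\,\dif s,\qquad |(Tu)'(t)|\le\int_0^\infty\Bigl|\tfrac{\partial G}{\partial t}(t,s)\Bigr|\bigl(\varphi_R(s)+(k+M)\,R\bigr)\,\dif s,
\end{equation*}
and I would estimate each integral by splitting it into the ``$\varphi_R$-part'' and the ``constant part'' $(k+M)R$. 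The crucial point---sharper than Remark~\ref{r-bound-G-Gprima}---is that both $G(t,s)$ and $\partial G/\partial t(t,s)$ vanish whenever $t\le s$ and $s\ge\xi_{m-1}$; hence the $s$-integration is really only over $[0,\xi_{m-1})$ when $t\le\xi_{m-1}$, and over $[0,t)$ when $t>\xi_{m-1}$. Using this together with $|G(t,s)|\le C_1 e^{-k(t+s)/2}$, $|\partial G/\partial t(t,s)|\le C_2 e^{-k(t+s)/2}$ and $e^{-kt/2}\le 1$, the $\varphi_R$-part of $|Tu(t)|$ is bounded by
\begin{equation*}
C_1\,\max\Bigl\{\,\sup_{t>\xi_{m-1}}e^{-\frac{kt}{2}}\!\int_0^t e^{-\frac{ks}{2}}\varphi_R(s)\,\dif s,\ \ \int_0^{\xi_{m-1}}e^{-\frac{ks}{2}}\varphi_R(s)\,\dif s\,\Bigr\},
\end{equation*}
and that of $|(Tu)'(t)|$ by the same quantity with $C_2$ in place of $C_1$. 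For the constant part I would compute $e^{-kt/2}\int_0^{\xi_{m-1}}e^{-ks/2}\,\dif s=\tfrac2k e^{-kt/2}\bigl(1-e^{-k\xi_{m-1}/2}\bigr)\le\tfrac2k\bigl(1-e^{-k\xi_{m-1}/2}\bigr)$ in the range $t\le\xi_{m-1}$, and $e^{-kt/2}\int_0^t e^{-ks/2}\,\dif s=\tfrac2k\bigl(e^{-kt/2}-e^{-kt}\bigr)\le\tfrac1{2k}$ in the range $t>\xi_{m-1}$, the last inequality being the elementary bound $x-x^2\le\tfrac14$ with $x=e^{-kt/2}$. Thus the constant part of $|Tu(t)|$ is at most $C_1\max\{\tfrac12,\,2(1-e^{-k\xi_{m-1}/2})\}\,(1+\tfrac Mk)\,R$, and that of $|(Tu)'(t)|$ the same with $C_2$. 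Adding the two contributions in each case, passing to the supremum over $t$, and taking the maximum of the two bounds so obtained produces exactly the left-hand side of \eqref{H1}; since that quantity is $<R$, we obtain $\|Tu\|<R$, i.e.\ $T(Y)\subseteq Y$.

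The case $(H_2)$ is treated word for word, replacing $\varphi_R$ by $\phi_R$; the only thing worth noticing is that, although $\phi_R\in L^\infty[0,\infty)$ and not $L^1$, one still has $e^{-ks/2}\phi_R(s)\in L^1[0,\infty)$, so all the integrals remain finite and the $(H_2)$-inequality plays precisely the role of \eqref{H1}. Once $T(Y)\subseteq Y$ is known, Schauder's theorem gives a fixed point $u\in Y$, which is a solution of \eqref{e-multipoint}, and the argument above with $f(t_0,0,0)\neq 0$ shows it is nontrivial.

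I expect the main obstacle to be obtaining a coefficient of $R$ strictly smaller than $1$ in the ``constant part'' estimate: a naive use of $\int_0^\infty e^{-k(t+s)/2}\,\dif s=\tfrac2k e^{-kt/2}\le\tfrac2k$ would leave the factor $2(1+M/k)>1$ in front of $R$ and be useless. Getting below $1$ genuinely requires exploiting the support structure of the Green's function (its vanishing for $s\ge\xi_{m-1}$ when $t\le s$), together with the optimization $\max_{x\in[0,1]}(x-x^2)=\tfrac14$, and then matching the constants that come out with the precise shape of \eqref{H1}---that bookkeeping is where the care lies, the rest of the argument being routine given Lemma~\ref{l-T-comp-cont-multipoint}.
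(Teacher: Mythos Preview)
Your proposal is correct and follows essentially the same route as the paper: apply Schauder to the ball of radius $R$, using the vanishing of $G(t,\cdot)$ and $\partial G/\partial t(t,\cdot)$ for $s\ge\max\{t,\xi_{m-1}\}$ to split into the cases $t\le\xi_{m-1}$ and $t>\xi_{m-1}$, and in the latter invoke $x-x^2\le\tfrac14$ to control the constant part. The only cosmetic difference is that you work with the closed ball $\|u\|\le R$ (which is what Schauder actually requires) whereas the paper writes the open ball; your formulation is the cleaner one.
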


\begin{proof}
We will prove the first case, being the second one analogous.

Consider 
\begin{equation*}
D=\{u\in X:\ \Vert u\Vert <R\}.
\end{equation*}%
If $u\in D$ then, 
\begin{equation*}
|Tu(t)|\leq \int_{0}^{\infty }\left\vert G(t,s)\right\vert \,\left( \varphi
_{R}(s)+\left( k+M\right) R\right) \,\dif s,\ \forall \,t\in \lbrack
0,\infty ),
\end{equation*}%
and, since $G(t,s)=0$ for $s\geq \max \{t,\,\xi _{m-1}\}$, 
\begin{equation*}
|Tu(t)|\leq \int_{0}^{\max \{t,\,\xi_{m-1}\}}\left\vert G(t,s)\right\vert
\,\left( \varphi _{R}(s)+\left( k+M\right) R\right) \,\dif s,\ \forall
\,t\in \lbrack 0,\infty ).
\end{equation*}%
If $t>\xi _{m-1}$, the previous expression leads to 
\begin{equation*}
\begin{split}
|Tu(t)|& \leq \int_{0}^{t}\left\vert G(t,s)\right\vert \,\left( \varphi
_{R}(s)+\left( k+M\right) R\right) \,\dif s\\ &\leq C_{1}\,e^{-\frac{k\,t}{2}%
}\int_{0}^{t}e^{-\frac{k\,s}{2}}\,\left( \varphi _{R}(s)+\left( k+M\right)
R\right) \,\dif s \\
& \leq C_{1}\,\left( e^{-\frac{k\,t}{2}}\int_{0}^{t}e^{-\frac{k\,s}{2}%
}\,\varphi _{R}(s)\,\dif s+2\,e^{-\frac{k\,t}{2}}\left( 1-e^{-\frac{k\,t}{2}%
}\right) \left( 1+\frac{M}{k}\right) R\right)  \\
& \leq C_{1}\,\left( e^{-\frac{k\,t}{2}}\int_{0}^{t}e^{-\frac{k\,s}{2}%
}\,\varphi _{R}(s)\,\dif s+\frac{1}{2}\left( 1+\frac{M}{k}\right) R\right) .
\end{split}%
\end{equation*}%
On the other hand, if $t\leq \xi _{m-1}$, we obtain that 
\begin{equation*}
\begin{split}
|Tu(t)|& \leq \int_{0}^{\xi _{m-1}}\left\vert G(t,s)\right\vert \,\left(
\varphi _{R}(s)+\left( k+M\right) R\right) \,\dif s \\
& \leq C_{1}\,e^{-\frac{k\,t%
}{2}}\int_{0}^{\xi _{m-1}}e^{-\frac{k\,s}{2}}\,\left( \varphi _{R}(s)+\left(
k+M\right) R\right) \,\dif s \\
& \leq C_{1}\,\left( e^{-\frac{k\,t}{2}}\int_{0}^{\xi _{m-1}}e^{-\frac{k\,s}{%
2}}\,\varphi _{R}(s)\,\dif s+2\,e^{-\frac{k\,t}{2}}\left( 1-e^{-\frac{k\,\xi
_{m-1}}{2}}\right) \left( 1+\frac{M}{k}\right) R\right)  \\
& \leq C_{1}\,\left( \int_{0}^{\xi_{m-1}}e^{-\frac{k\,s}{2}}\,\varphi
_{R}(s)\,\dif s+2\left( 1-e^{-\frac{k\,\xi_{m-1}}{2}}\right) \left( 1+\frac{M}{k}\right) R\right) .
\end{split}%
\end{equation*}

Therefore, 
\begin{equation*}
\begin{split}
|Tu(t)|\leq & \,C_{1}\max \left\{ \sup_{t>\xi _{m-1}}e^{-\frac{k\,t}{2}%
}\int_{0}^{t}e^{-\frac{k\,s}{2}}\,\varphi _{R}(s)\,\dif s,\,\int_{0}^{\xi
_{m-1}}e^{-\frac{k\,s}{2}}\,\varphi _{R}(s)\,\dif s\right\} \\
& +C_{1}\,\max \left\{ \frac{1}{2},\,2\left( 1-e^{-\frac{k\,\xi _{m-1}}{2}%
}\right) \right\} \left( 1+\frac{M}{k}\right) \,R,\quad \forall \,t\in
\lbrack 0,\infty ).
\end{split}%
\end{equation*}

Analogously, it can be seen that 
\begin{equation*}
\begin{split}
|(Tu)^{\prime }(t)|\leq & \,C_{2}\max \left\{ \sup_{t>\xi _{m-1}}e^{-\frac{%
k\,t}{2}}\int_{0}^{t}e^{-\frac{k\,s}{2}}\,\varphi _{R}(s)\,\dif %
s,\,\int_{0}^{\xi _{m-1}}e^{-\frac{k\,s}{2}}\,\varphi _{R}(s)\,\dif s\right\}
\\
& +C_{2}\,\max \left\{ \frac{1}{2},\,2\left( 1-e^{-\frac{k\,\xi _{m-1}}{2}%
}\right) \right\} \left( 1+\frac{M}{k}\right) \,R,\quad \forall \,t\in
\lbrack 0,\infty ).
\end{split}%
\end{equation*}

Thus, by (\ref{H1}), 
\begin{equation*}
\begin{split}
\Vert Tu\Vert \leq & \,\max \{C_{1},\,C_{2}\}\max \left\{ \sup_{t>\xi
_{m-1}}e^{-\frac{k\,t}{2}}\int_{0}^{t}e^{-\frac{k\,s}{2}}\,\varphi _{R}(s)\,%
\dif s,\,\int_{0}^{\xi _{m-1}}e^{-\frac{k\,s}{2}}\,\varphi _{R}(s)\,\dif 
s\right\} \\
& +\max \{C_{1},\,C_{2}\}\,\max \left\{ \frac{1}{2},\,2\left( 1-e^{-\frac{
k\,\xi _{m-1}}{2}}\right) \right\} \left( 1+\frac{M}{k}\right) \,R<R,
\end{split}%
\end{equation*}%
that is, $Tu\in D$.

Therefore, $TD\subset D$ and, from Theorem \ref{t-Schauder}, the operator $T$
has at least one fixed point in $D$, which is a solution of problem %
\eqref{e-multipoint}. Moreover, since there exists at least some value $%
t_{0}\in \lbrack 0,\infty )$ for which $f(t_{0},0,0)\neq 0$, this solution
can not be the trivial one.
\end{proof}

Now, we will give another existence result based on the lower and upper
solutions technique:

\begin{theorem}
\label{t-exist2-multipoint} Let $\alpha ,\,\beta \in X$ be lower and upper
solutions of problem \eqref{e-multipoint}, respectively, with 
\begin{equation*}
\alpha (t)\leq \beta (t),\quad \forall \,t\in \lbrack 0,\infty ),
\end{equation*}%
and denote 
\begin{equation}
\widetilde{R}=\max \{\Vert \alpha \Vert _{\infty },\,\Vert \beta \Vert
_{\infty }\}.  \label{Rtil}
\end{equation}

Assume that the nonlinearity $f(t,x,y)$ is nondecreasing in $y$ and, suppose
that, for $C_{1}$ and $C_{2}$ given by Remark \ref{r-bound-G-Gprima}, either

\begin{itemize}
\item $(H_{1})$ holds and, moreover, there exists some $R>0$ such that%
\begin{equation*}
\begin{split}
\max \{C_{1},\,C_{2}\}\max \left\{ \sup_{t>\xi _{m-1}}e^{-\frac{k\,t}{2}%
}\int_{0}^{t}e^{-\frac{k\,s}{2}}\varphi \,_{\max \{R,\widetilde{R}\}}(s)\,%
\dif s,\,\int_{0}^{\xi _{m-1}}e^{-\frac{k\,s}{2}}\,\varphi _{\max \{R,%
\widetilde{R}\}}(s)\,\dif s\right\} &  \\
+\max \{C_{1},\,C_{2}\}\,\max \left\{ \frac{1}{2},\,2\left( 1-e^{-\frac{%
k\,\xi _{m-1}}{2}}\right) \right\} \left( 1+\frac{M}{k}\right) R& <R.
\end{split}%
\end{equation*}
\end{itemize}
or
\begin{itemize}
\item $(H_{2})$ holds and, moreover, there exists some $R>0$ such that 
\begin{equation*}
\begin{split}
\max \{C_{1},\,C_{2}\}\max \left\{ \sup_{t>\xi _{m-1}}e^{-\frac{k\,t}{2}%
}\int_{0}^{t}e^{-\frac{k\,s}{2}}\,\phi _{\max \{R,\widetilde{R}\}}(s)\,\dif %
s,\,\int_{0}^{\xi _{m-1}}e^{-\frac{k\,s}{2}}\,\phi _{\max \{R,\widetilde{R}%
\}}(s)\,\dif s\right\} &  \\
+\max \{C_{1},\,C_{2}\}\,\max \left\{ \frac{1}{2},\,2\left( 1-e^{-\frac{%
k\,\xi _{m-1}}{2}}\right) \right\} \left( 1+\frac{M}{k}\right) R& <R.
\end{split}%
\end{equation*}
\end{itemize}

Then, problem \eqref{e-multipoint} has a solution $u\in X$ such that 
\begin{equation*}
\alpha(t)\le u(t) \le \beta(t), \quad \forall\,t\in [0,\infty).
\end{equation*}
\end{theorem}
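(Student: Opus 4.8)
The plan is a classical truncation argument in the spirit of the lower and upper solutions method, built on top of the tools already developed for Theorem~\ref{t-exist1-multipoint}. Since $\alpha\le\beta$, set $\delta(t,x)=\max\{\alpha(t),\min\{x,\beta(t)\}\}$; this is continuous, satisfies $\alpha(t)\le\delta(t,x)\le\beta(t)$ and hence $|\delta(t,x)|\le\widetilde{R}$ for all $(t,x)$. Let $p(t,x)=\frac{x-\delta(t,x)}{1+|x-\delta(t,x)|}$, a continuous function with $|p|\le1$ that vanishes exactly on $\{\alpha(t)\le x\le\beta(t)\}$ and is strictly positive (resp.\ negative) when $x>\beta(t)$ (resp.\ $x<\alpha(t)$). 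Replace $f(t,u,u')$ in \eqref{e-multipoint} by $f(t,\delta(t,u),u')+p(t,u)$, keeping the boundary conditions; the solutions of this modified problem are exactly the fixed points of
\begin{equation*}
T^{*}u(t)=\int_{0}^{\infty}G(t,s)\bigl(f(s,\delta(s,u(s)),u'(s))+p(s,u(s))+k\,u'(s)+M\,u(s)\bigr)\,ds .
\end{equation*}

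The map $(t,x,y)\mapsto f(t,\delta(t,x),y)+p(t,x)$ is again $L^{1}$-Carath\'eodory (resp.\ $L^{\infty}$-Carath\'eodory), and on every ball $\|u\|<r$ one has $|f(s,\delta(s,u(s)),u'(s))|\le\varphi_{\max\{r,\widetilde{R}\}}(s)$ because $|\delta|\le\widetilde{R}$ and $|u'|<r$; thus the proof of Lemma~\ref{l-T-comp-cont-multipoint} applies and $T^{*}$ is completely continuous on $X$. Running the estimates of the proof of Theorem~\ref{t-exist1-multipoint} with $\varphi_{\max\{R,\widetilde{R}\}}$ in place of $\varphi_{R}$ (the bounded term $p$ only enlarging the constant harmlessly), the smallness hypothesis gives $T^{*}(\overline{D})\subset\overline{D}$ for $\overline{D}=\{u\in X:\|u\|\le R\}$, so by Theorem~\ref{t-Schauder} there is $u\in\overline{D}$ with $u=T^{*}u$; as in the equiconvergence step of Lemma~\ref{l-T-comp-cont-multipoint}, the factor $e^{-kt/2}$ forces $u(+\infty)=0=u'(+\infty)$.

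It remains to show $\alpha\le u\le\beta$; I treat $u\le\beta$, the other inequality being symmetric. Let $\Omega=\{t\in[0,\infty):u(t)>\beta(t)\}$, an open set, and suppose $\Omega\ne\emptyset$; let $I$ be a connected component of $\Omega$ and put $v=u-\beta$. Since $\beta$ is an upper solution, $u(0)=0\le\beta(0)$, so $0\notin I$ and the left endpoint $a$ of $I$ satisfies $v(a)=0$; moreover $v>0$ on $I$ and $v(b)=0$ if the right endpoint $b$ is finite. On $I$ one has $\delta(t,u)=\beta(t)$ and $p(t,u)>0$, whence, using the upper-solution inequality,
\begin{equation*}
v''(t)\ \ge\ f(t,\beta(t),u'(t))-f(t,\beta(t),\beta'(t))+p(t,u(t))\qquad\text{a.e. on }I .
\end{equation*}
If $b<\infty$, then $v$ attains its maximum over $[a,b]$ at an interior point $t^{*}\in I$. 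If $I=(a,\infty)$, then either $v$ is eventually nondecreasing --- in which case $v'\ge0$ there, so by the monotonicity of $f$ in $y$ the displayed inequality yields $v''\ge p(t,u(t))>0$ on a half-line, forcing $v'$ to be eventually positive and increasing and hence $v$ unbounded, a contradiction --- or $v$ has an interior local maximum $t^{*}$ (and in fact a sequence of them tending to $+\infty$). At any such interior maximum $t^{*}$ one has $v'(t^{*})=0$, i.e.\ $u'(t^{*})=\beta'(t^{*})$, so the two values of $f$ cancel and the displayed inequality gives $v''(t^{*})\ge p(t^{*},u(t^{*}))>0$, contradicting $v''(t^{*})\le0$. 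Hence $\Omega=\emptyset$, i.e.\ $u\le\beta$; symmetrically, using $\delta(t,u)=\alpha(t)$ and $p(t,u)<0$ on $\{u<\alpha\}$, that $\alpha$ is a lower solution, and $u(0)=0\ge\alpha(0)$, one obtains $u\ge\alpha$.

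Finally, once $\alpha(t)\le u(t)\le\beta(t)$ for all $t$ we have $\delta(t,u(t))=u(t)$ and $p(t,u(t))=0$, so $u=T^{*}u$ reduces to $u=Tu$; therefore $u$ solves \eqref{e-multipoint} while satisfying $\alpha\le u\le\beta$, as required. The step I expect to be genuinely delicate is the localization: making the weak maximum principle rigorous when the right-hand side is only Carath\'eodory (so that $u''$ exists merely a.e., and the interior-maximum point $t^{*}$ must be chosen carefully, or the inequality for $v''$ integrated over a small interval), and the bookkeeping at infinity, where a component of $\{u>\beta\}$ may be a half-line and the supremum of $u-\beta$ might a priori be approached only at $+\infty$ --- this is exactly the point where the monotonicity of $f$ in its third argument does the work.
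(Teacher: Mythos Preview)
Your overall plan---truncate via $\delta$, add a sign-carrying penalty, apply Schauder to the resulting operator, then localize using the monotonicity of $f$ in $y$---is exactly the paper's. Two points in the execution do not go through as written.

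\textbf{The penalty is not small.} You take $p(t,x)=\dfrac{x-\delta(t,x)}{1+|x-\delta(t,x)|}$, bounded by $1$, and assert that in the invariance estimate it ``only enlarges the constant harmlessly''. It does not: running the estimates of Theorem~\ref{t-exist1-multipoint}, the extra contribution of $p$ to $\Vert T^{*}u\Vert$ is the fixed positive number
\[
\max\{C_{1},C_{2}\}\,\max\Bigl\{\tfrac{1}{2},\,2\bigl(1-e^{-k\xi_{m-1}/2}\bigr)\Bigr\}\cdot\tfrac{2}{k},
\]
and nothing in the hypothesis guarantees this fits inside the gap $R-\text{(left-hand side)}$. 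The paper handles exactly this by using the penalty $\varepsilon\,(u-\delta(t,u))$ and first choosing $\varepsilon>0$ so small that the strict inequality survives with the additional term $\dfrac{\varepsilon}{k}(R+\widetilde R)$; only then is $T^{*}D\subset D$ secured. Replacing your $p$ by $\varepsilon\,p$ with $\varepsilon$ chosen the same way repairs your argument.

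\textbf{The pointwise maximum argument.} On a bounded component you evaluate $v''$ at an interior maximum $t^{*}$ and invoke $v''(t^{*})\le 0$; in the Carath\'eodory setting $v''$ exists only a.e., so neither step is available, as you note yourself. Integrating ``over a small interval around $t^{*}$'' is not the fix either: near $t^{*}$ you have no sign information on $u'-\beta'$, so the monotonicity of $f$ in $y$ gives nothing. The paper instead selects a one-sided interval $(t_{*},t_{0})$ on which simultaneously $v>0$ and $v'>0$; the latter forces $u'>\beta'$, so monotonicity yields $f(t,\beta,u')\ge f(t,\beta,\beta')$ pointwise a.e., and then
\[
0\ \ge\ v'(t_{0})-v'(t_{*})\ =\ \int_{t_{*}}^{t_{0}} v''\ \ge\ \varepsilon\int_{t_{*}}^{t_{0}} v\ >\ 0
\]
gives the contradiction. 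Your half-line subcase already uses this integrated idea correctly; the bounded-component subcase should be redone in the same spirit rather than at a single point.
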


\begin{proof}
We will prove the first case, being the second one analogous.

Let $\varepsilon>0$ be such that 
\begin{equation*}
\begin{split}
\max\{C_1,\,C_2\} \max\left\{ \sup_{t>\xi_{m-1}} e^{-\frac{k\,t}{2}}
\int_{0}^{t}e^{-\frac{k\,s}{2}}\, \varphi_{\max\{R,\widetilde{R}\}}(s)\, %
\dif s, \, \int_{0}^{\xi_{m-1}}e^{-\frac{k\,s}{2}}\, \varphi_{\max\{R,%
\widetilde{R}\}}(s)\, \dif s \right\} & \\
+ \max\{C_1,\,C_2\} \, \max\left\{ \frac{1}{2}, \, 2\left(1-e^{-\frac{%
k\,\xi_{m-1}}{2}}\right) \right\}\left(\left(1+\frac{M}{k}\right) R+\frac{%
\varepsilon}{k}\left(R+\widetilde{R}\right)\right) & < R.
\end{split}%
\end{equation*}

Consider the modified problem 
\begin{equation}  \label{e-multipoint-mod-lowup}
\left\{%
\begin{split}
&u^{\prime \prime }(t)+k\,u^{\prime }(t)+M\,u(t) =
f(t,\delta(t,u(t)),u^{\prime }(t))+k\,u^{\prime
}(t)+M\,u(t)+\varepsilon\left(u(t)-\delta(t,u(t))\right), \quad t\in
[0,\infty), \\
&u(0)=0, \ u^{\prime }(+\infty)=\sum_{i=1}^{m-1}\alpha_i\,u^{\prime }(\xi_i),
\end{split}%
\right.
\end{equation}
where the function $\delta\colon [0,\infty)\times {\mathbb{R}}\rightarrow {%
\mathbb{R}}$ is given by 
\begin{equation*}
\delta(t,u(t))=\left\{%
\begin{array}{ll}
\beta(t), & u(t)>\beta(t), \\[2pt] 
u(t), & \alpha(t)\le u(t) \le \beta(t), \\[2pt] 
\alpha(t), & u(t)<\alpha(t).%
\end{array}
\right.
\end{equation*}

Define now operator $T^*\colon X \rightarrow X$ by 
\begin{equation*}
T^*u(t)=\int_{0}^{\infty}G(t,s) \left(f(s,\delta(s,u(s)),u^{\prime }(s))
+k\,u^{\prime }(s) +M\,u(s)+\varepsilon(u(s)-\delta(s,u(s))) \right).
\end{equation*}

Following the same steps as in Lemma \ref{l-T-comp-cont-multipoint}, it is
easy to prove that if $(H_{1})$ holds, then $T^{\ast }$ is well-defined in $%
X $ and it is a completely continuous operator.

Moreover, it is clear that, by (\ref{Rtil}), $|\delta (t,u(t))|\leq 
\widetilde{R}$ for all $t\in \lbrack 0,\infty )$. Thus, if we consider 
\begin{equation*}
D=\{u\in X:\ \Vert u\Vert <R\}
\end{equation*}%
and $u\in D$ then, following analogous steps than in the proof of Theorem %
\ref{t-exist1-multipoint}, it can be deduced that 
\begin{equation*}
\begin{split}
\Vert T^{\ast }u\Vert \leq & \,\max \{C_{1},\,C_{2}\}\max \left\{
\sup_{t>\xi _{m-1}}e^{-\frac{k\,t}{2}}\int_{0}^{t}e^{-\frac{k\,s}{2}%
}\,\varphi _{\max \{R,\widetilde{R}\}}(s)\,\dif s,\,\int_{0}^{\xi _{m-1}}e^{-%
\frac{k\,s}{2}}\,\varphi _{\max \{R,\widetilde{R}\}}(s)\,\dif s\right\} \\
& +\max \{C_{1},\,C_{2}\}\,\max \left\{ \frac{1}{2},\,2\left( 1-e^{-\frac{%
k\,\xi _{m-1}}{2}}\right) \right\} \left( \left( 1+\frac{M}{k}\right) R+%
\frac{\varepsilon }{k}\left( R+\widetilde{R}\right) \right) <R,
\end{split}%
\end{equation*}%
that is, $T^{\ast }u\in D$.

Therefore, $TD\subset D$ and, from Theorem \ref{t-Schauder}, $T^{\ast }$ has
at least one fixed point in D, which corresponds to a solution of problem %
\eqref{e-multipoint-mod-lowup}.

Finally, we will prove that this solution $u$ of problem %
\eqref{e-multipoint-mod-lowup} satisfies that 
\begin{equation*}
\alpha(t)\le u(t) \le \beta(t), \quad \forall\,t\in[0,\infty),
\end{equation*}
which implies that it is also a solution of problem \eqref{e-int-T}.

Define $v(t)=u(t)-\beta (t)$ and consider 
\begin{equation*}
v(t_{0}):=\sup \{v(t):\ t\in \lbrack 0,\infty ]\}.
\end{equation*}%
Suppose that $v(t_{0})>0$. Then, since 
\begin{equation*}
v(0)=-\beta (0)\leq 0,
\end{equation*}%
necessarily $t_{0}\neq 0$. Thus, we have two possibilities:

\begin{itemize}
\item[(1)] If $t_{0}\in (0,\infty ),$ then 
\begin{equation*}
v(t_{0})=\max \{v(t):\ t\in (0,\infty )\}=\max \{v(t):\ t\in \lbrack
0,\infty )\}.
\end{equation*}%
In this case, $v^{\prime }(t_{0})=0$ and, moreover, there exists $t_{1}\in
(0,\infty )$ such that $v(t)>0$, $v^{\prime }(t)>0$ and $v^{\prime \prime
}(t)\leq 0$ a.\thinspace e. on $(t_{1},t_{0})$.

\item[(2)] If $t_{0}=+\infty $, then there exists $t_{2}\in (0,\infty )$
such that $v(t)>0$ and $v^{\prime }(t)>0$ on $(t_{2},\infty )$. Moreover, as 
$v\in X$, it is upperly bounded, and, since $v^{\prime }(t)>0$ on $%
(t_{2},\infty )$, it must occur that $v^{\prime }(+\infty )=0$. Thus, there
exists $t_{3}\geq t_{2}$ such that $v^{\prime \prime }(t)\leq 0$
a.\thinspace e. on $(t_{3},\infty )$.
\end{itemize}

Therefore, in both cases, there exists $t_{\ast }\in [0,\infty )$ such that $%
v(t)>0$, $v^{\prime }(t)>0$ and $v^{\prime \prime }(t)\leq 0$ a.\thinspace
e. on $(t_{\ast },t_{0})$. Then, 
\begin{equation*}
\int_{t_{\ast }}^{t_{0}}v^{\prime \prime }(t)\,\dif t\leq 0.
\end{equation*}

On the other hand, we reach the contradiction 
\begin{equation*}
\begin{split}
0& \geq \int_{t_{\ast }}^{t_{0}}v^{\prime \prime }(t)\,\dif t=\int_{t_{\ast
}}^{t_{0}}\left( u^{\prime \prime }(t)-\beta ^{\prime \prime }(t)\right) \,%
\dif t \\
& \geq \int_{t_{\ast }}^{t_{0}}\left( f(t,\delta (t,u(t)),u^{\prime
}(t))+\varepsilon \left( u(t)-\delta (t,u(t))\right) -f(t,\beta (t),\beta
^{\prime }(t))\right) \,\dif t \\
& =\int_{t_{\ast }}^{t_{0}}\left( f(t,\beta (t),u^{\prime }(t))-f(t,\beta
(t),\beta ^{\prime }(t))+\varepsilon \left( u(t)-\beta (t)\right) \right) \,%
\dif t \\
& \geq \varepsilon \int_{t_{\ast }}^{t_{0}}\left( u(t)-\beta (t)\right) \,%
\dif t=\varepsilon \int_{t_{\ast }}^{t_{0}}v(t)\,\dif t>0.
\end{split}%
\end{equation*}%
Therefore 
\begin{equation*}
\sup \{v(t):\ t\in \lbrack 0,\infty )\}\leq 0,
\end{equation*}%
that is, 
\begin{equation*}
u(t)\leq \beta (t),\quad t\in \lbrack 0,\infty ).
\end{equation*}

Analogously, it can be seen that 
\begin{equation*}
u(t)\ge \alpha(t), \quad t\in[0,\infty).
\end{equation*}

Therefore, $u$ is a solution of problem \eqref{e-multipoint}.
\end{proof}

\section{Example}

Let us consider the following boundary value problem: 
\begin{equation}  \label{ex1-multipoint}
\left\{%
\begin{split}
& u''(t)=\frac{1}{1000}\,(2+\sin t)\,e^{-|u(t)|} \, \frac{|1-u(t)|}{(u(t))^2+1}\,\left(u'(t)-1\right), \quad t\in[0,\infty), \\[2pt]
& u(0)=0, \ u^{\prime }(+\infty)=0.11 \,u^{\prime }(0)+0.89 \,u^{\prime
}(0.11).
\end{split}%
\right.
\end{equation}
This problem is a particular case of \eqref{e-multipoint} with $%
f(t,x,y)=\frac{1}{1000}\,(2+\sin t)\,e^{-|x|}\,\frac{|1-x|}{x^2+1}\,(y-1)$, $m=3$, $\alpha_1=0.11$, $%
\alpha_2=0.89$, $\xi_1=0$ and $\xi_2=0.11$.

We have that for $|x|,\,|y|<r$, it holds that 
\begin{equation*}
|f(t,x,y)|\le \frac{1}{1000}\, (2+\sin t)\,(r+1)^2,
\end{equation*}
so we could take $\phi_r(t)=\frac{1}{1000}\, (2+\sin t)\,(r+1)^2$ and hypothesis $(H_2)$
holds. We note that, since $\phi_r\notin L^1[0,\infty)$, results in \cite{JiangYang2016} can not be applied to solve this problem.

We will look for a pair of lower and upper solutions of problem %
\eqref{e-multipoint} and suitable values for $k$ and $M$ for which the
hypotheses in Theorem \ref{t-exist2-multipoint} hold.

As lower and upper solutions we will take 
\begin{equation*}
\alpha(t)=\frac{3}{400}\left(-(t+1)\,e^{-t}+\frac{t^2-t}{t^2+1}\right) \quad \text{and} \quad
\beta(t)=1, \quad \forall\,t\in[0,\infty).
\end{equation*}

It can be checked that $\|\alpha\|_\infty\approx 0.0087$ and $\|\beta\|_\infty=1$. Therefore, we obtain that $\widetilde{R}$ given in \eqref{Rtil} is
\begin{equation*}
\widetilde{R}=1.
\end{equation*}
Moreover, for $M=0.35$ and $k=0.86$, we obtain the following approximations for $C_1$ and $C_2$:
\[C_1\approx 1.2305, \quad C_2\approx 1.3395.\]
Therefore,
\begin{equation*}
\max\{C_1,C_2\}\,\max\left\{\frac{1}{2},2\left( 1-e^{-\frac{k\,\xi_{m-1}}{2}%
}\right) \right\} \left(1+\frac{M}{k}\right)\approx 0.9423.
\end{equation*}

On the other hand, 
\[ \int_{0}^{\xi_{m-1}}e^{-\frac{k\,s}{2}}\, \phi_{\max\{R,\widetilde{R}%
	\}}(s)\, \dif s \approx 0.00022 \,\left( \max\{R,\widetilde{R}\} +1\right)^2\]
and
\[\sup_{t>\xi_{m-1}} e^{-\frac{k\,t}{2}} \int_{0}^{t} e^{-\frac{k\,s}{2}}\, \phi_{\max\{R,\widetilde{R}\}}(s)\, \dif s \approx 0.00174\, \left( \max\{R,\widetilde{R}\} +1\right)^2. \]

Therefore, we can approximate 
\begin{equation*}
\begin{split}
\max\{C_1,\,C_2\} \max\left\{ \sup_{t>\xi_{m-1}} e^{-\frac{k\,t}{2}}
\int_{0}^{t}e^{-\frac{k\,s}{2}}\, \phi_{\max\{R,\widetilde{R}\}}(s)\, \dif %
s, \, \int_{0}^{\xi_{m-1}}e^{-\frac{k\,s}{2}}\, \phi_{\max\{R,\widetilde{R}%
\}}(s)\, \dif s \right\} & \\
+ \max\{C_1,\,C_2\} \, \max\left\{ \frac{1}{2}, \, 2\left(1-e^{-\frac{%
k\,\xi_{m-1}}{2}}\right) \right\}\left(1+\frac{M}{k}\right) R & \\
\approx 0.00233 \,\left( \max\{R,1\} +1\right)^2+0.9423\,R,
\end{split}%
\end{equation*}
and it can be seen that for $R\in (R_0,R_1)$, with $R_0\approx 0.1615$ and $R_1\approx 22.7199$, it holds that
\begin{equation*}
0.00233 \,\left( \max\{R,1\} +1\right)^2 +0.9423\,R< R.
\end{equation*}

Therefore, we have proved the existence of a solution $u$ of problem \eqref{ex1-multipoint} such that 
\begin{equation*}
\frac{3}{400}\left(-(t+1)\,e^{-t}+\frac{t^2-t}{t^2+1}\right)\leq u(t)\leq 1,\quad \forall \,t\in
\lbrack 0,\infty ),
\end{equation*}%
which implies that this solution is non trivial.

\section*{Declarations}
\subsection*{Availability of data and material}
Not applicable.

\subsection*{Competing interests}
The authors declare that there is no conflict of interest regarding the publication of this article.

\subsection*{Funding}
First author was partially supported by Xunta de Galicia (Spain), project
EM2014/032, AIE Spain and FEDER, grants MTM2013-43014-P, MTM2016-75140-P,
and FPU scholarship, Ministerio de Educaci\'{o}n, Cultura y Deporte, Spain.

\subsection*{Authors' contributions}
Each author equally contributed to this paper, and read and approved the final manuscript.

\subsection*{Acknowledgments}
Not applicable.

\end{document}